\newcommand{\bR}{{\mathbb R}}
\newcommand{\R}{{\mathbb R}}
\def\sign{{\mbox{\small\rm  sign}}\,}
\def\rr{{\mathbb R}}
\def\rn{{{\rr}^n}}
\def\zz{{\mathbb Z}}
\def\nn{{\mathbb N}}
\def\cx{{\mathcal X}}
\def\mi{{\mathrm I}}
\def\fz{\infty}
\def\dz{\delta}
\def\lf{\left}
\def\r{\right}
\def\ls{\lesssim}
\def\gs{\gtrsim}
\def\noz{\nonumber}
\def\st{\subset}
\def\com{\complement}
\def\bh{\backslash}
\def\supp{\mathop\mathrm{\,supp\,}}
\def\esup{\mathop\mathrm{\,ess\,sup\,}}
\def\BMO{{\mathop\mathrm{BMO}(\rn)}}
\def\bmow{{\mathop\mathcal{BMO}_w(\rn)}}
\def\mv{{\mathcal M_\phi}}
\newtheorem{theorem}{Theorem}[section]
\newtheorem{lemma}[theorem]{Lemma}
\newtheorem{proposition}[theorem]{Proposition}
\theoremstyle{definition}
\newtheorem{definition}[theorem]{Definition}
\newtheorem{remark}[theorem]{Remark}
\numberwithin{equation}{section}
\begin{document}

\arraycolsep=1pt

\title{\bf\Large Weighted Endpoint Estimates for Commutators of
Calder\'on-Zygmund Operators
\footnotetext{\hspace{-0.35cm} 2010 {\it
Mathematics Subject Classification}.
Primary 47B47; Secondary 42B20, 42B30, 42B35.
\endgraf {\it Key words and phrases}. Calder\'on-Zygmund operator, commutator, Muckenhoupt weight,
$\mathop\mathrm{BMO}$ space, Hardy space.
\endgraf
The second author is supported by Vietnam National Foundation for Science and Technology Development (Grant No. 101.02-2014.31).
The third author is supported by the National
Natural Science Foundation  of China
(Grant Nos. 11571039 and 11361020),
the Specialized Research Fund for the Doctoral Program
of Higher Education of China (Grant No. 20120003110003) and
the Fundamental Research
Funds for Central Universities of China (Grant Nos. 2014KJJCA10).}}
\author{Yiyu Liang, Luong Dang Ky and Dachun Yang\,\footnote{Corresponding author}}
\date{ }

\maketitle

\vspace{-0.8cm}

\begin{center}
\begin{minipage}{13cm}
{\small {\bf Abstract}\quad
Let $\delta\in(0,1]$ and $T$ be a $\delta$-Calder\'on-Zygmund operator.
Let $w$ be in the Muckenhoupt class $A_{1+\delta/n}({\mathbb R}^n)$ satisfying
$\int_{{\mathbb R}^n}\frac {w(x)}{1+|x|^n}\,dx<\infty$.
When $b\in{\rm BMO}(\mathbb R^n)$,
it is well known that the commutator $[b, T]$ is not bounded from $H^1(\mathbb R^n)$
to $L^1(\mathbb R^n)$ if $b$ is not a constant function.
In this article, the authors find out a proper subspace
${\mathop\mathcal{BMO}_w({\mathbb R}^n)}$
of $\mathop\mathrm{BMO}(\mathbb R^n)$ such that,
if $b\in {\mathop\mathcal{BMO}_w({\mathbb R}^n)}$, then $[b,T]$ is bounded from the
weighted Hardy space $H_w^1(\mathbb R^n)$ to the weighted Lebesgue
space $L_w^1(\mathbb R^n)$.
Conversely, if $b\in{\rm BMO}({\mathbb R}^n)$ and the commutators of the
classical Riesz transforms $\{[b,R_j]\}_{j=1}^n$
are bounded from $H^1_w({\mathbb R}^n)$ into $L^1_w(\R^n)$,
then $b\in {\mathop\mathcal{BMO}_w({\mathbb R}^n)}$.
}
\end{minipage}
\end{center}

\section{Introduction}\label{s1}

\hskip\parindent
Given a function $b$ locally integrable on $\mathbb R^n$ and a classical
Calder\'on-Zygmund operator $T$, we consider the linear commutator $[b, T]$
defined by setting, for smooth, compactly supported functions $f$,
$$[b, T](f)=bT(f) - T(bf).$$

A classical result of Coifman et al. \cite{CRW} states
that the commutator $[b,T]$ is bounded on $L^p(\mathbb R^n)$ for $p\in(1,\infty)$,
when $b\in{\rm BMO}(\mathbb R^n)$.
Moreover, their proof does not rely on a weak type $(1, 1)$ estimate for $[b, T]$.
Indeed, this operator is more singular than the
associated Calder\'on-Zygmund operator since it fails, in general,
to be of weak type $(1, 1)$, when $b$ is in ${\rm BMO}(\mathbb R^n)$.
Moreover, Harboure et al. \cite[Theorem (3.1)]{HST} showed that $[b,T]$ is bounded from
$H^1(\mathbb R^n)$ to $L^1(\mathbb R^n)$ if and only if
$b$ equals to a constant almost everywhere.
Although the commutator $[b,T]$ does not map continuously,
in general,  $H^1(\mathbb R^n)$ into $L^1(\mathbb R^n)$,
following P\'erez \cite{Pe}, one can find a subspace $\mathcal H^1_b(\mathbb R^n)$
of $H^1(\mathbb R^n)$ such that $[b,T]$ maps continuously
$\mathcal H^1_b(\mathbb R^n)$ into $L^1(\mathbb R^n)$.
Very recently, Ky \cite{Ky} found  the {\sl largest subspace of $H^1(\mathbb R^n)$}
such that all commutators $[b,T]$ of Calder\'on-Zygmund operators are bounded
from this subspace into $L^1(\mathbb R^n)$.
More precisely, it was showed in \cite{Ky} that there exists a bilinear operators
$\mathfrak R:= \mathfrak R_T$ mapping continuously
$H^1(\mathbb R^n)\times{\rm BMO}(\mathbb R^n)$ into $L^1(\mathbb R^n)$ such that,
for all  $(f,b)\in H^1(\mathbb R^n)\times{\rm BMO}(\mathbb R^n)$, we have
\begin{equation}\label{abstract 1}
[b,T](f)= \mathfrak R(f,b) + T(\mathfrak S(f,b)),
\end{equation}
where $\mathfrak S$ is a bounded bilinear operator from
$H^1(\mathbb R^n)\times{\rm BMO}(\mathbb R^n)$ into $L^1(\mathbb R^n)$
which is independent of $T$.
The bilinear decomposition (\ref{abstract 1}) allows ones to give a
general overview of all known endpoint estimates;
see \cite{Ky} for the details.

For the weighted case, when $b\in{\rm BMO}(\R^n)$,
\'Alvarez et al. \cite{abkp} proved that the commutator $[b,T]$
is bounded on the weighted Lebesgue space $L_w^p(\rn)$
with $p\in (1,\fz)$ and $w\in A_p(\rn)$,
where $A_p(\rn)$ denotes the class of Muckenhoupt weights.
Similar to the unweighted case, $[b,T]$ may not be bounded
from the weighted Hardy space
$H_w^1(\rn)$ into the weighted Lebesgue space $L_w^1(\rn)$
if $b$ is not a constant function.
Thus, a natural question is whether there exists a non-trivial subspace
of $\BMO$ such that, when $b$ belongs to this subspace,
the commutator $[b,T]$ is bounded from $H_w^1(\rn)$ to $L_w^1(\rn)$.

The purpose of the present paper is to give an answer for the above question.
To this end, we first recall the definition of the Muckenhoupt weights.
A non-negative measurable function $w$ is said to belong to
the {\it class of Muckenhoupt weight
$A_q(\rn)$} for $q\in[1,\fz)$,
denoted by $w\in A_q(\rn)$ if, when $q\in (1,\fz)$,
\begin{equation}\label{w-1}
[w]_{A_q(\rn)}:=\sup_{B\subset\rn}\frac{1}{|B|}\int_Bw(x)\,dx
\lf\{\frac{1}{|B|}\int_B
[w(y)]^{-q'/q}\,dy\r\}^{q/q'}<\fz,
\end{equation}
where $1/q+1/q'=1$, or, when $q=1$,
\begin{equation}\label{w-2}
[w]_{A_1(\rn)}:=\sup_{B\subset\rn}\frac{1}{|B|}\int_B w(x)\,dx
\lf(\esup_{y\in B}[w(y)]^{-1}\r)<\fz.
\end{equation}
Here the suprema are taken over all balls $B\subset\rn$.
Let 
$$A_\fz(\rn):=\bigcup_{q\in[1,\fz)}A_q(\rn).$$

Let $w\in A_\infty(\R^n)$ and $q\in(0,\infty]$. If $q\in(0,\infty)$,
then we let $L^q_w(\R^n)$ be the space of all measurable functions $f$ such that
\begin{equation}
\|f\|_{L^q_w(\rn)}:= \lf\{\int_{\R^n} |f(x)|^q w(x)\,dx\r\}^{1/q}<\infty.
\end{equation}
When $q=\infty$, $L^\infty_w(\R^n)$ is defined to be the same as $L^\infty(\R^n)$ and,
for any $f\in L^\infty_w(\rn)$, let
$$\|f\|_{L^\infty_w(\rn)}:= \|f\|_{L^\infty(\rn)}.$$

Let $\phi$ be a function in the Schwartz class, $\mathcal S(\bR^n)$,
satisfying $\phi(x)=1$ for all $x\in B(0,1)$.
The {\it maximal function} of a tempered distribution $f\in \mathcal S'(\bR^n)$
is defined by
\begin{equation}
\mv f:= \sup_{t\in(0,\fz)}|f*\phi_t|,
\end{equation}
where  $\phi_t(\cdot):=\frac1{t^n}\phi(t^{-1}\cdot)$ for all $t\in (0,\infty)$.
Then the {\it weighted Hardy space} $H^1_w(\bR^n)$ is defined as the space of all tempered distributions $f\in \mathcal S'(\bR^n)$ such that
$$\|f\|_{H^1_w(\rn)}:=\|\mv f\|_{L^1_w(\rn)}<\fz;$$
see \cite{g79}.

Notice that $\|\cdot\|_{H^1_w(\rn)}$ defines a norm on $H^1_w(\bR^n)$,
whose size depends on the choice of $\phi$,
but the space $H^1_w(\bR^n)$ is independent of this choice.

\begin{definition}\label{d-bmow}
Let $w\in A_\infty(\rn)$ and
$\int_{\bR^n} \frac{w(x)}{1+|x|^n}\,dx<\infty$.
A locally integrable function $b$ is said to be in
$\mathcal{BMO}_w(\rn)$ if
\begin{equation}
\|b\|_{\mathcal{BMO}_w(\rn)}:= \sup_{B}
\left\{\int_{B^\com} \frac{w(x)}{|x-x_B|^n}\,dx \frac{1}{w(B)}
\int_{B} |b(x) - b_B|dx \right\}<\infty,
\end{equation}
where the supremum is taken over all balls $B\subset \R^n$ and $B^\com:=\rn\bh B$.
Here and hereafter, $x_B$ denotes the center of ball $B$,
$$w(B):=\int_B w(x)\,dx\quad \mathrm{and}\quad b_B:=\frac1{|B|}\int_Bb(x)\,dx.$$
\end{definition}

It should be pointed out that the space $\mathcal{BMO}_w(\rn)$ has been
considered first by Bloom \cite{Bl} when studying the
pointwise multipliers of weighted BMO spaces (see also \cite{Ya}).

Recall that a locally integrable function $b$ is said to be in $\BMO$ if
$$\|b\|_{\BMO}:=\sup_{B}\frac1{|B|}\int_B|b(x)-b_B|\,dx<\fz,$$
where the supremum is taken over all balls $B\subset \R^n$.

\begin{remark}
	{\rm (i)}   $\mathcal{BMO}_w(\rn)\subset{\rm BMO}(\bR^n)$ and the inclusion is continuous (see Proposition \ref{inclusions between BMO spaces} of Section \ref{s2}).

{\rm (ii)} It is easy to show that, when $n=1$, $w(x):=|x|^{-1/2}\in A_1(\rr)$
and $\int_\rr\frac{w(x)}{1+|x|}\,dx<\fz$. Let
\begin{eqnarray*}
f(x):= \left\lbrace
\begin{array}{l l}
|1-x|,
\ &|x|\le1,\\
\\
0,
&|x|>1.
\end{array} \right.
\end{eqnarray*}
Then $f\in\mathcal{BMO}_w(\rn)$,
which implies that $\bmow$ is not a trivial function space.
\end{remark}

To state our main results, we first recall the definition
of Calder\'on-Zygmund operators.
For $\dz\in(0,1]$, a linear operator $T$ is called a
\emph{$\dz$-Calder\'on-Zygmund
operator} if $T$ is a linear bounded operator on $L^2(\rn)$
and there exist a kernel $K$ on
$(\rn\times\rn)\setminus\{(x,x):\ x\in\rn\}$ and a positive constant $C$ such that,
for all $x,\,y,\,z\in\rn$,
$$|K(x,y)|\le \frac C{|x-y|^{n}}\quad\mathrm{if}\quad x\neq y,$$
$$|K(x,y)-K(x,z)| + |K(y,x) - K(z,x)|\le C\frac{|y-z|^\dz}{|x-y|^{n+\dz}}
\quad\mbox{ if }\quad |x-y|>2|y-z|$$
and, for all $f\in L^2(\rn)$ with compact support and $x\notin\supp(f)$,
$$Tf(x)=\int_{\supp (f)}K(x,y)f(y)\,dy.$$

The main result of this paper is the following theorem.

\begin{theorem}\label{t-bmow}
Let $\dz\in(0,1]$, $w\in A_{1+\dz/n}(\rn)$
with $\int_{\bR^n} \frac{w(x)}{1+|x|^n}\,dx<\infty$ and $b\in\BMO$.
Then the following two statements are equivalent:
\begin{enumerate}
\item[{\rm (i)}]
for every $\delta$-Calder\'on-Zygmund operator $T$, the commutator $[b,T]$ is bounded from $H^1_w(\bR^n)$ into $L^1_w(\bR^n)$;

\item[{\rm (ii)}] $b\in \mathcal{BMO}_w(\rn)$.
\end{enumerate}
\end{theorem}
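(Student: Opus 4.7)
The plan is to prove the two implications separately. For $(\mathrm{ii})\Rightarrow(\mathrm{i})$ I would combine the atomic decomposition of $H^1_w(\rn)$ with a bilinear splitting of $[b,T]$ in the spirit of Ky~\cite{Ky}; for $(\mathrm{i})\Rightarrow(\mathrm{ii})$ I would adapt Uchiyama's lower bound for Riesz transforms to the weighted setting. The elementary inclusion $\bmow\subset\BMO$ recorded in the remark (which I would establish first, using the integrability assumption $\int_\rn w(x)/(1+|x|^n)\,dx<\fz$) is used throughout to pass from the $\bmow$-hypothesis on $b$ to the unweighted BMO bounds on $b$ needed in the estimates below.

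For $(\mathrm{ii})\Rightarrow(\mathrm{i})$, it suffices to bound $\|[b,T]a\|_{L^1_w(\rn)}$ uniformly over $(1,q,0)_w$-atoms $a$ supported in balls $B=B(x_B,r)$, where $q>1$ is chosen so that both $T$ and the Hardy--Littlewood maximal operator act boundedly on $L^q_w(\rn)$ (possible since $w\in A_{1+\dz/n}(\rn)$). Write
$$[b,T]a=(b-b_B)Ta-T\bigl((b-b_B)a\bigr).$$
For the second term, I would decompose $(b-b_B)a$ as a weighted Hardy-space element plus a small correction proportional to $a$ (to absorb the possibly nonzero mean produced by multiplication by $b-b_B$), then invoke the $H^1_w\to L^1_w$-boundedness of $T$ itself. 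For the first term, split the integration into $2B$ and $(2B)^\com$: on $2B$ use H\"older, the weighted $L^q$-boundedness of $T$ due to \'Alvarez et al., and the weighted John--Nirenberg inequality for $A_\infty$ weights; on $(2B)^\com$ use the Calder\'on--Zygmund decay $|Ta(x)|\ls r^{n+\dz}/(w(B)|x-x_B|^{n+\dz})$ via a dyadic annular decomposition, balancing $2^{-k\dz}$ against the logarithmic growth $|b_{2^kB}-b_B|\ls k\|b\|_{\BMO}$ so that the surviving weighted tail absorbs into $\int_{B^\com}w(x)|x-x_B|^{-n}\,dx$ and yields the quantity of Definition~\ref{d-bmow}.

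For $(\mathrm{i})\Rightarrow(\mathrm{ii})$, fix a ball $B$ and consider the weighted $(1,\fz,0)_w$-atom
$$a(x):=\frac{\mathrm{sgn}(b(x)-b_B)-c_B}{w(B)}\chi_B(x),\qquad c_B:=\frac{1}{|B|}\int_B\mathrm{sgn}(b(y)-b_B)\,dy,$$
which satisfies $\|a\|_{L^\fz(\rn)}\ls 1/w(B)$, $\supp a\subset B$, $\int_\rn a(x)\,dx=0$ and, crucially, $\int_\rn a(x)(b(x)-b_B)\,dx=\frac{1}{w(B)}\int_B|b(x)-b_B|\,dx$. By a Uchiyama-type argument one obtains, for $x$ far from $B$, a directional lower bound $\sum_{j=1}^n\epsilon_j(x)R_ja(x)\gs r^n/|x-x_B|^n$ for suitable unit signs $\epsilon_j(x)$; pairing $[b,R_j]a$ against these signs times a bounded weight in $x$, and handling the ``commuted'' remainder $R_j((b-b_B)a)$ by the weighted $L^2$-theory of Riesz transforms (available since $w\in A_{1+\dz/n}\subset A_2$), the hypothesis $\sum_j\|[b,R_j]a\|_{L^1_w(\rn)}\ls 1$ reduces to exactly the product inequality defining $\bmow$.

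The main obstacle I anticipate is the far-field piece of $(b-b_B)Ta$ in direction $(\mathrm{ii})\Rightarrow(\mathrm{i})$: the $\bmow$-norm is a product of a local oscillation average on $B$ and a weighted tail integral on $B^\com$, rather than a single local quantity, so the dyadic analysis must balance the logarithmic growth of $|b_{2^kB}-b_B|$ against the CZ decay $2^{-k\dz}$ and then reassemble the surviving weighted tail into exactly $\int_{B^\com}w(x)|x-x_B|^{-n}\,dx\cdot\frac{1}{w(B)}\int_B|b-b_B|\,dx$. A secondary difficulty in $(\mathrm{i})\Rightarrow(\mathrm{ii})$ is to verify that the Uchiyama lower bound, designed originally for the unweighted case, genuinely produces this product form rather than merely a weighted local oscillation estimate.
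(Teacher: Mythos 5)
Your two-step skeleton (atomic reduction, the splitting $[b,T]a=(b-b_B)Ta-T((b-b_B)a)$, and testing with a sign-atom) matches the paper's, but both directions contain a genuine gap, and in each case the gap sits exactly where the $\bmow$ hypothesis has to do its work. In $(\mathrm{ii})\Rightarrow(\mathrm{i})$, the ``correction proportional to $a$'' cannot absorb the nonzero mean of $(b-b_B)a$, because $\int_\rn a=0$, so adding $ca$ changes nothing; and you never say how $\|(b-b_B)a\|_{H^1_w(\rn)}$ is to be estimated, which is the crux. The point of the standing assumption $\int_\rn w(x)(1+|x|^n)^{-1}\,dx<\fz$ is that no mean-zero correction is needed: a bounded function supported in $B$ with nonzero integral already lies in $H^1_w(\rn)$, since its maximal function decays like $|x-x_B|^{-n}$ off $2B$ and $w(x)|x-x_B|^{-n}$ is integrable there. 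The paper shows directly that $\mv((b-b_B)a)(x)\ls|x-x_B|^{-n}\frac{1}{w(B)}\int_B|b-b_B|$ for $x\in(2B)^\com$, and integrating against $w$ produces \emph{precisely} the product in Definition~\ref{d-bmow}; the local piece is handled by a weighted John--Nirenberg inequality. This is the only place $\|b\|_{\bmow}$ (rather than $\|b\|_{\BMO}$) is needed. By contrast, the far field of $(b-b_B)Ta$, which you claim ``yields the quantity of Definition~\ref{d-bmow},'' cannot do so: that integral involves $|b(x)-b_B|$ for $x$ \emph{outside} $B$, not the oscillation of $b$ over $B$, and thanks to the cancellation of $a$ and the $\dz$-H\"older regularity of the kernel the dyadic sum $\sum_k k\,2^{-k(\dz+n-nq)}$ converges with only $\|b\|_{\BMO}$ (this is the paper's Lemma~\ref{the class of K type}, valid for every $b\in\BMO$).

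In $(\mathrm{i})\Rightarrow(\mathrm{ii})$ your testing atom and the identity $\int_B(b-b_B)a=\frac{1}{w(B)}\int_B|b-b_B|$ are right, but the claimed lower bound $\sum_j\epsilon_j(x)R_ja(x)\gs r^n/|x-x_B|^n$ is false: $a$ has mean zero, so $R_ja(x)$ decays like $|x-x_B|^{-n-1}$, one order too fast. The object carrying a nonzero ``charge'' is $(b-b_B)a$, so any Uchiyama/Janson-type argument must be run on $R_j((b-b_B)a)$, expanding the kernel around $y=x_B$ and using $\sum_j|K_j(x,x_B)|\gs|x-x_B|^{-n}$, while $(b-b_B)R_ja$ is an error term controlled by $\|b\|_{\BMO}$ via Lemma~\ref{the class of K type} (this is where the a priori hypothesis $b\in\BMO$ enters); weighted $L^2$ theory gives no far-field information. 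The paper avoids kernel lower bounds altogether: from (i) and Lemma~\ref{the class of K type} it gets $\|R_j((b-b_B)a)\|_{L^1_w(\rn)}\ls1$ for all $j$, invokes Wheeden's Riesz-transform characterization of $H^1_w(\rn)$ to conclude $(b-b_B)a\in H^1_w(\rn)$ with controlled norm, and then applies the elementary bound $\mv g(x)\gs|x-x_B|^{-n}|\int g|$ for $x\notin B$ (Lemma~\ref{l-con}) to the nonzero mean of $(b-b_B)a$; integrating over $B^\com$ against $w$ yields the $\bmow$ product. Your route can be repaired, but only by moving the lower bound from $R_ja$ to $R_j((b-b_B)a)$ and estimating the first-order Taylor error of the kernel.
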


\begin{remark}
When $w(x)\equiv1$ for all $x\in\rn$,
we see that $\int_\rn\frac{1}{1+|x|^n}\,dx=\fz$
and hence, in this case, $\bmow$ can be seen as a zero space in $\BMO$.
In this case, Theorem \ref{t-bmow} coincides with the result in \cite{HST}.
\end{remark}

The next theorem gives a sufficient condition
of the boundedness of $[b,T]$ on $H_w^1(\rn)$.
Recall that, for $w\in A_p(\R^n)$ with $p\in(1,\fz)$ and
$q\in [p, \infty]$,
a measurable function $a$ is called an {\it $(H_w^1(\rn),q)$-atom} related to a ball
$B\st\rn$ if
 \begin{enumerate}
 	\item[(i)] $\supp a\st B$,
 	
 	\item[(ii)] $\int_\rn a(x)\,dx=0$,
 	
 	\item[(iii)] $\|a\|_{L^q_w(\rn)}\le [w(B)]^{1/q-1}$
 \end{enumerate}
and also that $T^*1=0$ means $\int_{\mathbb R^n} Ta(x)\,dx=0$
holds true for all $(H_w^1(\rn),q)$-atoms $a$.

\begin{theorem}\label{t-hbd}
Let $\dz\in(0,1]$, $T$ be a $\delta$-Calder\'on-Zygmund operator,
$w\in A_{1+\dz/n}(\rn)$
with $\int_{\bR^n} \frac{w(x)}{1+|x|^n}\,dx<\infty$ and $b\in\mathcal{BMO}_w(\rn)$.
If $T^*1=0$, then the commutator $[b,T]$ is bounded on $H^1_w(\bR^n)$, namely, 
there exists a positive constant $C$ such that, for all $f\in H^1_w(\bR^n)$,
$$\|[b,T](f)\|_{H^1_w(\bR^n)}\le C\|f\|_{H^1_w(\bR^n)}.$$
\end{theorem}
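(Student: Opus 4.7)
The plan is to prove the theorem via the atomic characterization of $H_w^1(\R^n)$: establish a uniform estimate
$$\|[b,T]a\|_{H_w^1(\R^n)}\ls\|b\|_{\mathcal{BMO}_w(\R^n)}$$
for every $(H_w^1(\R^n),q)$-atom $a$, where $q$ is chosen large enough that $w\in A_{q/(1+\dz/n)}(\R^n)$ (possible by the openness of Muckenhoupt classes), and then sum against an atomic decomposition of an arbitrary $f\in H_w^1(\R^n)$. I will repeatedly use two preliminary facts: that $T$ is itself bounded on $H_w^1(\R^n)$ (a standard atom-by-atom argument, using $w\in A_{1+\dz/n}(\R^n)$, $T^*1=0$, and the $\dz$-smoothness of $K$), and that $\mathcal{BMO}_w(\R^n)\subset\BMO$ continuously by Proposition \ref{inclusions between BMO spaces}, so the usual John-Nirenberg and reverse H\"older controls on $b-b_B$ over balls $B$ are available.

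Fix such an atom $a$ supported in a ball $B=B(x_B,r_B)$ and write
$$[b,T]a=(b-b_B)Ta-T\bigl((b-b_B)a\bigr)=:\mathrm{I}+\mathrm{II}.$$
For $\mathrm{II}$, set $m_B:=\frac1{|B|}\int_B(b-b_B)a=\frac1{|B|}\int_Bba$ (using $\int a=0$) and split $(b-b_B)a=\bigl[(b-b_B)a-m_B\chi_B\bigr]+m_B\chi_B$. The bracketed function has mean zero and support in $B$, and a H\"older estimate together with $\|b\|_{\BMO}\ls\|b\|_{\mathcal{BMO}_w(\R^n)}$ shows it is a constant multiple of an $(H_w^1(\R^n),q)$-atom, so applying the $H_w^1$-bounded operator $T$ gives the required estimate. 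The corrector $m_B\chi_B$ admits the standard dyadic-shell atomic expansion with summable coefficients, handled likewise.

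For $\mathrm{I}$, I would split $\R^n=2B\cup(2B)^\com$. On $2B$, after subtracting the appropriate mean, the $L_w^q$-boundedness of $T$ together with $b\in\BMO$ recovers an atomic factor. On $(2B)^\com$, I decompose dyadically into annuli $A_k:=2^{k+1}B\setminus 2^kB$ and use the Calder\'on-Zygmund tail bound $|Ta(x)|\ls r_B^{n+\dz}|x-x_B|^{-n-\dz}$. The main obstacle lies here: the weighted tail integral $\int_{B^\com}w(x)|x-x_B|^{-n}\,dx$ built into the definition of $\|b\|_{\mathcal{BMO}_w(\R^n)}$ must be matched against the $k$-decaying tails of $(b-b_B)\,Ta$ on each $A_k$, which requires the full strength of $w\in A_{1+\dz/n}(\R^n)$ (to convert the $|Ta|$-bound to $L_w^1$-scale) together with the $\dz$-smoothness of $K$; the cancellation needed for $H_w^1$-molecule status is supplied by $\int Ta=0$, which follows from $T^*1=0$.

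Summing against an atomic decomposition $f=\sum_i\lz_ia_i$ with $\sum_i|\lz_i|\ls\|f\|_{H_w^1(\R^n)}$, and using Theorem \ref{t-bmow} to justify convergence of the series for $[b,T]f$ in $L_w^1(\R^n)$, yields the desired inequality $\|[b,T]f\|_{H_w^1(\R^n)}\ls\|b\|_{\mathcal{BMO}_w(\R^n)}\|f\|_{H_w^1(\R^n)}$.
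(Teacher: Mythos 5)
Your skeleton matches the paper's: reduce to atoms, split $[b,T]a=(b-b_B)Ta-T((b-b_B)a)$, use the $H^1_w(\rn)$-boundedness of $T$ (which is where $T^*1=0$ enters) for the second term, and let the tail integral $\int_{B^\com}w(x)|x-x_B|^{-n}\,dx$ built into $\|b\|_{\mathcal{BMO}_w(\rn)}$ absorb the missing cancellation. The genuine gap is in your treatment of $\mathrm{I}=(b-b_B)Ta$: you claim that ``the cancellation needed for $H^1_w$-molecule status is supplied by $\int Ta=0$.'' That is the wrong cancellation. In general $\int_{\rn}[b(x)-b_B]Ta(x)\,dx\neq 0$ even when $\int_{\rn}Ta(x)\,dx=0$, so $(b-b_B)Ta$ is not a molecule, and no size/decay estimate on annuli can by itself place it in $H^1_w(\rn)$; this non-vanishing integral is exactly the obstruction that makes $[b,T]$ unbounded on $H^1$ for general $b\in\BMO$. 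The paper closes this as follows: $\int Ta=0$ is used only to show that $Ta$ itself is an $(H_w^1(\rn),\infty,\varepsilon)$-molecule with $\varepsilon=n+\delta-nq$, which by Lemma \ref{molecule} gives $Ta=\sum_j\lambda_j a_j$ with atoms $a_j$ on $2^{j+1}B$ and $|\lambda_j|\ls 2^{-j\varepsilon}$; then each $(b-b_B)a_j$ is split as $(b-b_{2^{j+1}B})a_j+(b_{2^{j+1}B}-b_B)a_j$, the first piece being controlled by the key estimate (\ref{the key estimate}), $\|(b-b_{B'})a'\|_{H^1_w(\rn)}\ls\|b\|_{\mathcal{BMO}_w(\rn)}$, whose proof is a direct computation with $\mv$: the maximal function of the non-cancelling part decays only like $|x-x_{B'}|^{-n}$ and is integrable against $w$ precisely because of the $\mathcal{BMO}_w$ pairing. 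You need that mechanism at the level of the maximal function of each piece, not merely for the $L^1_w$-norm of $(b-b_B)Ta$ on annuli.

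The same issue appears, in milder form, in your corrector $m_B\chi_B$ for term $\mathrm{II}$: since $\int_{\rn}m_B\chi_B(x)\,dx\neq0$, it has no atomic expansion in the usual sense. The telescoping expansion $\chi_B/|B|=\sum_{k}(\chi_{2^kB}/|2^kB|-\chi_{2^{k+1}B}/|2^{k+1}B|)$ does repair this, with coefficient sum comparable to $\int_{B^\com}w(x)|x-x_B|^{-n}\,dx$, but this must be stated and its convergence in $H^1_w(\rn)$ checked using $\int_{\rn}w(x)(1+|x|^n)^{-1}\,dx<\infty$; ``handled likewise'' does not cover it. (The paper avoids the splitting entirely by proving (\ref{the key estimate}) directly.) Finally, uniform bounds on individual atoms do not automatically yield a bounded extension to all of $H^1_w(\rn)$; the paper invokes Lemma \ref{blyz}, working with finite linear combinations of continuous $(H_w^1(\rn),\infty)$-atoms, precisely to make this last step rigorous.
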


Finally we make some conventions on notation.
Throughout the whole article, we denote by $C$ a \emph{positive constant} which is
independent of the main parameters, but it may vary from line to
line. The {\it symbol} $A\ls B$ means that $A\le CB$. If $A\ls
B$ and $B\ls A$, then we write $A\sim B$.
For any measurable subset $E$ of $\rn$, we denote by $E^\complement$ the
{\it set} $\rn\setminus E$ and its \emph{characteristic function} by $\chi_{E}$.
We also let $\nn:=\{1,\,2,\,
\ldots\}$ and $\zz_+:=\nn\cup\{0\}$.


\section{Proofs of Theorems \ref{t-bmow} and \ref{t-hbd}}\label{s2}

\hskip\parindent
We begin with pointing out that, if $w\in A_\fz(\rn)$,
then there exist $p,\,r\in(1,\fz)$ such that
$w\in A_p(\rn)\cap RH_r(\rn)$,
where $RH_r(\rn)$ denotes the {\it reverse H\"older class} of weights $w$
satisfying that there exists a positive constant $C$ such that
$$\left(\frac{1}{|B|}\int_B [w(x)]^r\,dx\right)^{1/r}\le C \frac{1}{|B|}\int_B w(x)\,dx$$
for every ball $B\subset \R^n$. Moreover, there exist positive constants $C_1\le C_2$,
depending on $[w]_{A_\fz(\rn)}$, such that,
for any measurable sets $E\subset B$,
\begin{equation}\label{fundamental estimates for Muckenhoupt weights}
C_1\left(\frac{|E|}{|B|}\right)^p \le \frac{w(E)}{w(B)}\le C_2\left(\frac{|E|}{|B|}\right)^{(r-1)/r}.
\end{equation}

In order to prove Theorems \ref{t-bmow} and \ref{t-hbd},
we need the following proposition and several technical lemmas.

\begin{proposition}\label{inclusions between BMO spaces}
Let $w\in A_\infty(\R^n)$. Then there exists a positive constant $C$ such that,
for any $f\in \mathcal{BMO}_w(\rn)$,
$$\|f\|_{\BMO}\le C \|f\|_{\mathcal{BMO}_w(\rn)}.$$
\end{proposition}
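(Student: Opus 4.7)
The proof reduces to verifying, for every ball $B\subset\rn$, the pointwise-in-$B$ estimate
$$\frac{1}{|B|}\int_B|f(x)-f_B|\,dx\le C\|f\|_{\mathcal{BMO}_w(\rn)},$$
after which a supremum over $B$ yields the proposition. Factoring the left-hand side as $(w(B)/|B|)\cdot\bigl(w(B)^{-1}\int_B|f-f_B|\,dx\bigr)$ and comparing with the defining expression of $\|\cdot\|_{\mathcal{BMO}_w(\rn)}$, the task is equivalent to showing the uniform-in-$B$ lower bound
$$\int_{B^\com}\frac{w(x)}{|x-x_B|^n}\,dx\gtrsim\frac{w(B)}{|B|},$$
because multiplying this by $w(B)^{-1}\int_B|f-f_B|\,dx$ on both sides and invoking the definition of $\|f\|_{\mathcal{BMO}_w(\rn)}$ gives exactly the desired pointwise bound.

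To establish the lower bound I would restrict the integration to the annulus $2B\setminus B$, on which $|x-x_B|\le2r_B$ (with $r_B$ the radius of $B$) so that $|x-x_B|^{-n}\gtrsim|B|^{-1}$, and hence
$$\int_{B^\com}\frac{w(x)}{|x-x_B|^n}\,dx\ge\int_{2B\setminus B}\frac{w(x)}{|x-x_B|^n}\,dx\gtrsim\frac{w(2B\setminus B)}{|B|}.$$
It then remains to produce a positive constant $c=c(n,[w]_{A_\infty(\rn)})$ with $w(2B\setminus B)\ge c\,w(B)$. For this I would apply the left-hand inequality of (2.1) to the pair $(E,B')=(2B\setminus B,2B)$: since the volume ratio $|2B\setminus B|/|2B|=(2^n-1)/2^n$ is purely dimensional, that inequality gives
$$w(2B\setminus B)\ge C_1\left(\frac{2^n-1}{2^n}\right)^p w(2B)\ge C_1\left(\frac{2^n-1}{2^n}\right)^p w(B),$$
the last step being the trivial monotonicity $w(2B)\ge w(B)$.

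After this, the proof is only an unwinding of definitions. The single analytic input is the $A_\infty$-type volume comparison (2.1) already recorded at the start of Section \ref{s2}, and there is no real obstacle: the content of the argument is simply the observation that the outer factor $\int_{B^\com}w(x)|x-x_B|^{-n}\,dx$ built into the definition of $\|\cdot\|_{\mathcal{BMO}_w(\rn)}$ is bounded below, up to a dimensional/weight constant, by $w(B)/|B|$, which is precisely the ratio needed to convert the weighted mean $w(B)^{-1}\int_B|f-f_B|\,dx$ back into the unweighted $\BMO$ mean $|B|^{-1}\int_B|f-f_B|\,dx$.
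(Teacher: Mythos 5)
Your argument is correct and is essentially the paper's own proof: both restrict the outer integral to the annulus $2B\setminus B$, use $|x-x_B|\le 2r_B$ there, and invoke the lower bound in (\ref{fundamental estimates for Muckenhoupt weights}) together with $w(2B)\ge w(B)$ to get $w(2B\setminus B)\gtrsim w(B)$. Nothing further is needed.
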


\begin{proof}
By (\ref{fundamental estimates for Muckenhoupt weights}), for any ball $B\st\rn$,
we have
\begin{eqnarray*}
	\int_{B^\com} \frac{w(x)}{|x-x_B|^n}\,dx \frac{1}{w(B)}
	&&\ge\int_{2B\bh B} \frac{w(x)}{|x-x_B|^n}\,dx \frac{1}{w(B)}\\
	&&\ge\frac{w(2B\bh B)}{|2B|}\frac1{w(B)}\\
    &&\gs\frac1{|B|}.
\end{eqnarray*}
This proves that $\|f\|_{\BMO}\ls \|f\|_{\mathcal{BMO}_w(\rn)},$
which completes the proof of Proposition \ref{inclusions between BMO spaces}.
\end{proof}

\begin{lemma}\label{l-con}
	Let $f$ be a measurable function such that supp $f\subset B:=B(x_0,r)$
	with $x_0\in\rn$ and $r\in(0,\fz)$. Then there exists a positive constant $C:=C(\phi,n)$, depending only on $\phi$ and $n$, such that, for all $x\notin B$,
	$$\frac{1}{|x-x_0|^n} \left|\int_{B(x_0,r)} f(y)\,dy \right|\le C \mv f(x).$$
\end{lemma}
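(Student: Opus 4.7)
The plan is to exploit the fact that $\phi \equiv 1$ on $B(0,1)$ by selecting, for each $x \notin B$, a scale $t = t(x)$ large enough that $\phi_t(x-\cdot)$ is identically equal to $t^{-n}$ on the support of $f$. Then the convolution $f * \phi_t(x)$ reduces to a constant multiple of $\int_B f(y)\,dy$, and estimating this single value of the convolution from below by the maximal function $\mathcal{M}_\phi f(x)$ yields the claim.

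Concretely, I would take $t := |x-x_0| + r$. For any $y \in B(x_0, r)$, the triangle inequality gives $|x-y| \le |x-x_0| + |x_0 - y| < |x-x_0| + r = t$, so $(x-y)/t \in B(0,1)$ and hence $\phi((x-y)/t) = 1$. Since $\operatorname{supp} f \subset B$, this gives
\begin{equation*}
f * \phi_t(x) = \frac{1}{t^n} \int_{B(x_0, r)} f(y)\, \phi\!\left(\frac{x-y}{t}\right) dy = \frac{1}{t^n} \int_{B(x_0, r)} f(y)\,dy.
\end{equation*}
The assumption $x \notin B$ gives $|x-x_0| \ge r$, whence $t \le 2|x-x_0|$. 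Therefore
\begin{equation*}
\frac{1}{|x-x_0|^n}\left|\int_{B(x_0,r)} f(y)\,dy\right| = \frac{t^n}{|x-x_0|^n}\,|f * \phi_t(x)| \le 2^n\, \mathcal{M}_\phi f(x),
\end{equation*}
so we may take $C = 2^n$ (independent even of $\phi$ in the end, though the statement only requires dependence on $\phi$ and $n$).

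There is essentially no obstacle here; the only subtlety is choosing a $t$ that is simultaneously (a) large enough for $\phi((x-y)/t)$ to be exactly $1$ on all of $B(x_0, r)$, and (b) comparable to $|x-x_0|$ so that the factor $t^{-n}$ yields the desired $|x-x_0|^{-n}$. The choice $t = |x-x_0| + r$ achieves both simultaneously because the hypothesis $x \notin B$ forces $r \le |x-x_0|$.
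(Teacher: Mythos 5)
Your proof is correct and is essentially the same as the paper's: both pick a single scale $t$ comparable to $|x-x_0|$ (the paper uses $t=2|x-x_0|$, you use $t=|x-x_0|+r$), use $x\notin B\Rightarrow r\le|x-x_0|$ to guarantee $\phi((x-y)/t)=1$ for all $y\in B$, and bound $\mv f(x)$ from below by that one convolution value, yielding $C=2^n$.
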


\begin{proof}
	For $x\notin B(x_0,r)$ and any $y\in B(x_0,r)$,
	it follows that 
$$\frac{|x-y|}{2|x-x_0|} < \frac{|x-x_0|+r}{2|x-x_0|}\le 1,$$
	which, together with $\phi\equiv1$ on $B(0,1)$,
	further implies that $\phi(\frac{x-y}{2|x-x_0|})=1$.
	Thus, we know that
	\begin{eqnarray*}
		\mv f(x)
		=&&\sup_{t\in(0,\fz)}|f*\phi_t(x)|\ge|f*\phi_{2|x-x_0|}(x)|\\
		=&&\frac{1}{2^n|x-x_0|^n}\left|\int_{B(x_0,r)} f(y)
		\phi\left(\frac{x-y}{2|x-x_0|}\right)\,dy\right|\\
		\gs&&\frac{1}{|x-x_0|^n} \left|\int_{B(x_0,r)} f(y)\,dy\right|,
	\end{eqnarray*}
	which completes the proof of Lemma \ref{l-con}.
\end{proof}

\begin{lemma}\label{a John-Nirenberg type lemma for weighted BMO}
	Let $w\in A_\infty(\R^n)$ and $q\in [1,\infty)$. Then there exists a positive constant $C$ such that, for any $f\in{\rm BMO}(\R^n)$ and any ball $B\subset \R^n$,
	$$\left[\frac{1}{w(B)} \int_{B} |f(x) - f_B|^q w(x)\,dx\right]^{1/q}\leq C \|f\|_{\BMO}.$$
\end{lemma}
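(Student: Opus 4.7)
The plan is to combine the classical John--Nirenberg inequality for $\mathrm{BMO}(\rn)$ with the $A_\infty$ reverse-inclusion for weights stated in \eqref{fundamental estimates for Muckenhoupt weights}, and then convert everything back to a weighted integral via the distribution-function (layer-cake) representation.

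First I would recall the John--Nirenberg inequality: there exist positive dimensional constants $c_1,c_2$ such that, for every $f\in\mathrm{BMO}(\rn)$, every ball $B\subset\rn$ and every $\lambda\in(0,\infty)$,
\begin{equation*}
\bigl|\{x\in B:\ |f(x)-f_B|>\lambda\}\bigr|\le c_1\,|B|\,\exp\!\lf(-\frac{c_2\lambda}{\|f\|_{\BMO}}\r).
\end{equation*}
Then, applying the second inequality in \eqref{fundamental estimates for Muckenhoupt weights} with $E:=\{x\in B:\ |f(x)-f_B|>\lambda\}\subset B$, there exist positive constants $C_2$ and $\eta:=(r-1)/r\in(0,1)$, depending only on $[w]_{A_\fz(\rn)}$, such that
\begin{equation*}
\frac{w(E)}{w(B)}\le C_2\lf(\frac{|E|}{|B|}\r)^{\eta}\le C_2\,c_1^{\eta}\exp\!\lf(-\frac{c_2\eta\,\lambda}{\|f\|_{\BMO}}\r).
\end{equation*}
This is the weighted analogue of John--Nirenberg that the argument hinges on.

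Next I would write, by the layer-cake formula,
\begin{equation*}
\frac{1}{w(B)}\int_B|f(x)-f_B|^q w(x)\,dx
=\frac{q}{w(B)}\int_0^{\fz}\lambda^{q-1}\,w\!\lf(\{x\in B:\ |f(x)-f_B|>\lambda\}\r)\,d\lambda,
\end{equation*}
and insert the exponential bound just obtained. After the change of variable $\lambda=\|f\|_{\BMO}\,t$, the integral reduces to $\|f\|_{\BMO}^q\int_0^\fz q\,t^{q-1}e^{-c_2\eta t}\,dt$, which is a finite constant depending only on $n$, $q$ and $[w]_{A_\fz(\rn)}$.

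I do not expect any serious obstacle: John--Nirenberg and the $A_\infty$ doubling estimate are both standard, and the chief care is merely to track that the exponent $\eta$ coming from \eqref{fundamental estimates for Muckenhoupt weights} is strictly positive so that the resulting exponential remains integrable against $\lambda^{q-1}$. The case $q=1$ needs no separate argument since the layer-cake formula applies for every $q\in[1,\fz)$.
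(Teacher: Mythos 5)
Your proposal is correct and follows essentially the same route as the paper: the classical John--Nirenberg inequality, the $A_\infty$ estimate $\frac{w(E)}{w(B)}\lesssim(\frac{|E|}{|B|})^{(r-1)/r}$ from \eqref{fundamental estimates for Muckenhoupt weights}, and the layer-cake formula to integrate the resulting exponential decay. No gaps.
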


\begin{proof}
It follows from the John-Nirenberg inequality that there exist two positive
constants $c_1$ and $c_2$, depending only on $n$, such that, for all $\lambda>0$,
$$|\{x\in B: |f(x)- f_B|>\lambda\}|
\leq c_1 e^{-c_2 \frac{\lambda}{\|f\|_{\BMO}}} |B|;$$
see \cite{jn}.
Therefore, by (\ref{fundamental estimates for Muckenhoupt weights}), we see that
\begin{eqnarray*}
\frac{1}{w(B)} \int_{B} |f(x) - f_B|^q w(x)\,dx &=& q \int_{0}^\infty \lambda^{q-1} \frac{w(\{x\in B: |f(x)- f_B|>\lambda\})}{w(B)} \,d\lambda\\
&\ls&   \int_0^\infty \lambda^{q-1}
\left[\frac{|\{x\in B: |f(x)- f_B|>\lambda\}|}{|B|}\right]^{(r-1)/r} \,d\lambda\\
&\ls&  \int_0^\infty \lambda^{q-1} e^{-c_2 \frac{r-1}{r} \frac{\lambda}{\|f\|_{\BMO}}}
\,d\lambda\\
&\ls& \|f\|^q_{\BMO},
\end{eqnarray*}
which completes the proof of Lemma \ref{a John-Nirenberg type lemma for weighted BMO}.	
\end{proof}

\begin{lemma}\label{the class of K type}
Let $\dz\in(0,1]$, $q\in (1, 1+\delta/n)$ and $w\in A_{q}(\rn)$.  Assume that $T$ is a $\dz$-Calder\'on-Zygmund operator.
Then there exists a positive constant $C$ such that,
for any $b\in{\rm BMO}(\R^n)$ and $(H_w^1(\rn),q)$-atom $a$ related to the ball $B\subset \R^n$,
$$\|(b-b_B)Ta\|_{L^1_w(\rn)}\leq C \|b\|_{\BMO}.$$
\end{lemma}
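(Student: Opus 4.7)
The plan is the classical atomic decomposition-style splitting: write
\[
\|(b-b_B)Ta\|_{L^1_w(\rn)}=\int_{2B}|b(x)-b_B||Ta(x)|w(x)\,dx+\int_{(2B)^\com}|b(x)-b_B||Ta(x)|w(x)\,dx=:\mathrm{I}+\mathrm{II},
\]
and handle $\mathrm{I}$ by $L^q_w$-boundedness of $T$ together with the weighted John--Nirenberg estimate, and $\mathrm{II}$ by the cancellation of $a$ combined with the smoothness of the kernel $K$.

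For $\mathrm{I}$ I would apply H\"older's inequality with exponents $q,q'$ to obtain
\[
\mathrm{I}\le\lf(\int_{2B}|b-b_B|^{q'}w\,dx\r)^{1/q'}\lf(\int_{2B}|Ta|^qw\,dx\r)^{1/q}.
\]
Since $w\in A_q(\rn)$ with $q\in(1,1+\dz/n)$, the operator $T$ is bounded on $L^q_w(\rn)$, hence the second factor is controlled by $\|a\|_{L^q_w(\rn)}\le [w(B)]^{1/q-1}$. For the first factor, Lemma \ref{a John-Nirenberg type lemma for weighted BMO} yields
\[
\lf(\int_{2B}|b-b_B|^{q'}w\,dx\r)^{1/q'}\ls [w(2B)]^{1/q'}(\|b\|_{\BMO}+|b_{2B}-b_B|)\ls [w(2B)]^{1/q'}\|b\|_{\BMO}.
\]
The doubling property of $w$ then gives $w(2B)\sim w(B)$, so that $\mathrm{I}\ls\|b\|_{\BMO}$.

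For $\mathrm{II}$ I would exploit $\int_B a=0$ to write, for $x\notin 2B$,
\[
Ta(x)=\int_B[K(x,y)-K(x,x_B)]a(y)\,dy,
\]
whence, by the regularity of $K$,
\[
|Ta(x)|\ls\frac{r_B^\dz}{|x-x_B|^{n+\dz}}\int_B|a(y)|\,dy\ls\frac{r_B^\dz|B|}{|x-x_B|^{n+\dz}\,w(B)},
\]
where the last bound uses H\"older's inequality and the $A_q$-condition to turn the $L^q_w$-normalization of $a$ into an $L^1$-estimate. Decomposing $(2B)^\com=\bigcup_{k\ge1}(2^{k+1}B\setminus 2^kB)$ and writing $b-b_B=(b-b_{2^{k+1}B})+(b_{2^{k+1}B}-b_B)$, Lemma \ref{a John-Nirenberg type lemma for weighted BMO} together with the standard telescoping estimate $|b_{2^{k+1}B}-b_B|\ls k\|b\|_{\BMO}$ yields
\[
\int_{2^{k+1}B}|b-b_B|w\,dx\ls k\,w(2^{k+1}B)\|b\|_{\BMO}.
\]
The $A_q$-condition further gives $w(2^{k+1}B)\ls 2^{knq}w(B)$. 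Summing,
\[
\mathrm{II}\ls\frac{r_B^\dz|B|\|b\|_{\BMO}}{w(B)}\sum_{k\ge1}\frac{k\,2^{knq}w(B)}{(2^kr_B)^{n+\dz}}\ls\|b\|_{\BMO}\sum_{k\ge1}k\,2^{-k(n+\dz-nq)}.
\]

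The main obstacle, and the reason the hypothesis $q<1+\dz/n$ is crucial, is precisely the convergence of this last series: the condition forces $n+\dz-nq>0$, so the geometric sum converges and $\mathrm{II}\ls\|b\|_{\BMO}$. Combining $\mathrm{I}$ and $\mathrm{II}$ finishes the proof. The remaining steps are routine, but I would double-check the quantitative form of the $A_q$ inequality $w(2^{k+1}B)\ls 2^{knq}w(B)$ and the passage from $\|a\|_{L^q_w(\rn)}\le[w(B)]^{1/q-1}$ to $\int_B|a|\ls|B|/w(B)$, as these are where the $A_q$-constant quietly enters the final bound.
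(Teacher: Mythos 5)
Your proof is correct and follows essentially the same route as the paper's: the same splitting over $2B$ and $(2B)^\com$, Hölder plus the weighted John--Nirenberg lemma and the $L^q_w$-boundedness of $T$ on the local part, and cancellation of $a$, kernel regularity, annuli decomposition, the telescoping bound $|b_{2^{k+1}B}-b_B|\ls k\|b\|_{\BMO}$, and the convergence of $\sum_k k2^{-k(n+\delta-nq)}$ on the far part. The only (harmless) cosmetic difference is that on $2B$ you apply Hölder directly to $b-b_B$ and use doubling, whereas the paper first splits off $b_{2B}-b_B$ and invokes the $H^1_w\to L^1_w$ boundedness of $T$ for that piece; your variant actually needs one fewer ingredient.
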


\begin{proof}
It suffices to show that
$$\mi_1:= \int_{2B} |[b(x)-b_B]Ta(x)| w(x)\,dx \ls \|b\|_{\BMO}$$
and
$$\mi_2:= \int_{(2B)^{\com}} |[b(x)-b_B]Ta(x)| w(x)\,dx \ls \|b\|_{\BMO}.$$

Indeed, by the boundedness of $T$ from $H^1_w(\rn)$ to $L^1_w(\rn)$ and from
$L^q_w(\rn)$ to itself with $q\in(1,1+\dz/n)$ (see \cite[Theorem 2.8]{gk94}),
the H\"older inequality and Lemma \ref{a John-Nirenberg type lemma for weighted BMO},
we conclude that
\begin{eqnarray}\label{bt2}
\mi_1 &=& \int_{2B} |[b(x)-b_B]Ta(x)| w(x)\,dx \\
&\leq& |b_{2B} - b_B| \|Ta\|_{L^1_w(\rn)} + \int_{2B} |[b(x)-b_{2B}]Ta(x)| w(x)\,dx \noz\\
&\ls& \|b\|_{\BMO} + \lf[\int_{2B} |b(x)-b_{2B}|^{q'} w(x)\,dx\r]^{1/q'} \lf[\int_{2B} |Ta(x)|^q w(x)\,dx\r]^{1/q}\noz\\
&\ls& \|b\|_{\BMO} + [w(2B)]^{1/q'} \|b\|_{\BMO} \|a\|_{L^q_w(\rn)}\noz\\
&\ls&  \|b\|_{\BMO},\noz
\end{eqnarray}
here and hereafter, $1/q'+ 1/q=1$.

On the other hand, by the H\"older inequality, (\ref{w-2}), Lemma \ref{a John-Nirenberg type lemma for weighted BMO} and (\ref{fundamental estimates for Muckenhoupt weights}),
we know that
\begin{eqnarray}\label{bt3}
\mi_2&&=\int_{(2B)^\com} |[b(x)-b_B]Ta(x)| w(x)\,dx\\
&&=\int_{(2B)^\com} |b(x)-b_B|
\lf|\int_Ba(y)[K(x,y)-K(x,x_0)]\,dy\r| w(x)\,dx\noz\\
&&\le\int_B|a(y)|\int_{(2B)^\com} |b(x)-b_B|
\lf|K(x,y)-K(x,x_0)\r| w(x)\,dx\,dy\noz\\
&&=\int_B|a(y)|
\sum_{k=1}^\fz\int_{2^{k+1}B\setminus 2^k B} |b(x)-b_B|
\lf|K(x,y)-K(x,x_0)\r| w(x)\,dx\,dy\noz\\
&&\ls\int_B|a(y)|\,dy
\sum_{k=1}^\fz\int_{2^{k+1}B\setminus 2^k B}
\frac{r^\dz}{(2^kr)^{n+\dz}}|b(x)-b_B| w(x)\,dx\noz\\
&&\ls \left[\int_B |a(y)|^q w(y)\,dy\right]^{1/q} \left[\int_B [w(y)]^{-q'/q}\,dy\right]^{1/q'}  \noz\\
&&\hskip 1cm \times\sum_{k=1}^\fz2^{-k\dz}\frac{1}{|2^{k+1}B|}\int_{2^{k+1}B}
\left[|b(x)-b_{2^{k+1}B}|+|b_{2^{k+1} B}-b_B|\right] w(x)\,dx\noz\\
&&\ls\frac{|B|}{w(B)}
\sum_{k=1}^\fz2^{-k\dz}k\frac{w(2^{k+1}B)}{|2^{k+1}B|}\|b\|_{\BMO}\noz\\
&&\ls
\|b\|_{\BMO} \sum_{k=1}^\fz k 2^{-k[\delta+n -nq]}\noz\\
&&\ls \|b\|_{\BMO},\noz
\end{eqnarray}
since $\delta+n -nq >0$ and $|b_{2^{k+1} B}-b_B|\ls k \|b\|_{\BMO}$ for all $k\geq 1$.

Combining (\ref{bt2}) and (\ref{bt3}),
we then complete the proof of Lemma \ref{the class of K type}.
 \end{proof}

 The following lemma is due to Bownik et al. \cite[Theorem 7.2]{BLYZ}.

 \begin{lemma}\label{blyz}
 Let $w\in A_{1+\delta/n}(\R^n)$ and $\cx$ be a Banach space.
 Assume that $T$ is a linear operator defined on the space of finite linear combinations of continuous $(H_w^1(\rn),\infty)$-atoms with the property that
 $$\sup\left\{\|T(a)\|_{\cx}: \mbox{$a$ is a continuous $(H_w^1(\rn),\infty)$-atom} \right\}<\infty.$$
 Then $T$ admits a unique continuous extension to a bounded linear operator from $H^1_w(\R^n)$ into $\cx$.
 \end{lemma}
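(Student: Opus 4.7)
The plan is to reduce the proof to establishing that the ``finite continuous atomic norm'' is equivalent to the genuine $H^1_w(\rn)$ norm on the space of finite linear combinations of continuous $(H^1_w(\rn),\infty)$-atoms. Denote this space by $\mathbb{H}^{1,\fz}_{w,\fin,c}(\rn)$ and, for $f\in \mathbb{H}^{1,\fz}_{w,\fin,c}(\rn)$, set
\begin{equation*}
\|f\|_{\mathbb{H}^{1,\fz}_{w,\fin,c}(\rn)}:=\inf\lf\{\sum_{j=1}^N|\lambda_j|:\ f=\sum_{j=1}^N\lambda_j a_j,\ a_j \text{ continuous }(H^1_w(\rn),\fz)\text{-atom}\r\}.
\end{equation*}
If I can show that there exists a positive constant $C_0$, depending only on $n$ and $[w]_{A_{1+\dz/n}(\rn)}$, such that
\begin{equation}\label{finite-eq}
C_0^{-1}\|f\|_{H^1_w(\rn)}\le \|f\|_{\mathbb{H}^{1,\fz}_{w,\fin,c}(\rn)}\le C_0 \|f\|_{H^1_w(\rn)}
\end{equation}
for all $f\in \mathbb{H}^{1,\fz}_{w,\fin,c}(\rn)$, then the conclusion follows: for such $f$ and any representation $f=\sum_{j=1}^N\lambda_j a_j$, the linearity of $T$ gives
\begin{equation*}
\|Tf\|_{\cx}\le \sum_{j=1}^N|\lambda_j|\, \|Ta_j\|_{\cx}\le M\sum_{j=1}^N|\lambda_j|,
\end{equation*}
where $M$ denotes the supremum in the hypothesis. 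Taking the infimum over representations and invoking \eqref{finite-eq} yields $\|Tf\|_\cx\le C_0 M\|f\|_{H^1_w(\rn)}$. Since $\mathbb{H}^{1,\fz}_{w,\fin,c}(\rn)$ is dense in $H^1_w(\rn)$ (see below), $T$ admits a unique continuous extension to a bounded operator from $H^1_w(\rn)$ into $\cx$.

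The first inequality in \eqref{finite-eq} is routine: each continuous atom $a$ satisfies the pointwise estimate $\mv a(x)\ls \|a\|_{L^\fz(\rn)}\chi_{2B}(x)+\|a\|_{L^\fz(\rn)}r^{n+1}|x-x_B|^{-(n+1)}\chi_{(2B)^\com}(x)$ (using the cancellation of $a$), from which a direct computation involving the doubling of $w$ and the reverse H\"older inequality \eqref{fundamental estimates for Muckenhoupt weights} yields $\|a\|_{H^1_w(\rn)}\le C$ uniformly in $a$, and the finite-sum case follows by the triangle inequality. The hard direction is the reverse bound. I would first use the Garc\'{\i}a-Cuerva atomic decomposition for $H^1_w(\rn)$ (which produces $(H^1_w(\rn),\fz)$-atoms for $w\in A_{1+\dz/n}(\rn)$), obtained from the Calder\'on--Zygmund decomposition of $\mv f$ at heights $2^k$. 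For $f\in \mathbb{H}^{1,\fz}_{w,\fin,c}(\rn)$, one obtains $f=\sum_{k\in\zz}\sum_j \lambda_{k,j}a_{k,j}$ with $\sum_{k,j}|\lambda_{k,j}|\ls \|f\|_{H^1_w(\rn)}$, convergence being in $L^1_w(\rn)$ and in $H^1_w(\rn)$. The atoms are a priori only bounded, so I would regularize them by convolution with a smooth mollifier at a scale comparable to the atom's supporting ball, absorbing the mollification error by standard estimates to produce continuous atoms at the price of a fixed multiplicative constant.

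The main obstacle is converting the infinite series $\sum_{k,j}\lambda_{k,j}a_{k,j}$ into a \textbf{finite} sum of continuous atoms with comparable norm. I expect this to be the delicate step and would follow the Meda--Sj\"ogren--Vallarino/Bownik strategy: for $|k|$ large, bundle all high-level (or low-level) atoms into a single atom supported on a large ball containing $\supp f$. Concretely, since $f$ has compact support and lies in $L^\fz(\rn)$, the tails $g_K:=\sum_{|k|>K}\sum_j\lambda_{k,j}a_{k,j}$ are supported in a fixed ball $B_0$ and satisfy $\int_{\rn}g_K\,dx=0$ together with $\|g_K\|_{L^\fz(\rn)}\to 0$ as $K\to\fz$, allowing $g_K$ to be rescaled into a single continuous atom with coefficient bounded by $\|g_K\|_{L^\fz(\rn)}w(B_0)$, which is controlled by $\|f\|_{H^1_w(\rn)}$ via the maximal function. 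Combining this bundling with the (already finite) low-frequency part gives \eqref{finite-eq}. Finally, density of $\mathbb{H}^{1,\fz}_{w,\fin,c}(\rn)$ in $H^1_w(\rn)$ follows from the density of the infinite atomic decomposition together with the regularization step, completing the proof.
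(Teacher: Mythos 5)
The paper does not prove this lemma at all: it is quoted verbatim from Bownik, Li, Yang and Zhou \cite[Theorem 7.2]{BLYZ}, whose proof is exactly the strategy you outline (the Meda--Sj\"ogren--Vallarino argument: equivalence of the finite continuous atomic norm with $\|\cdot\|_{H^1_w(\rn)}$, plus density). So your overall route is the ``correct'' one, and your reduction of the lemma to the norm equivalence \eqref{finite-eq}, as well as the easy direction of \eqref{finite-eq}, are sound (for the easy direction note that the convergence of $\sum_k 2^{-k(n+1-nq)}$ uses $w\in A_q$ with $q<1+\delta/n\le 1+1/n$, i.e.\ the left-hand, $A_p$ side of \eqref{fundamental estimates for Muckenhoupt weights}, not the reverse H\"older side).

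However, your sketch of the crucial bundling step misdescribes what actually has to be proved, in a way that hides the real difficulty. First, the high-level tail needs no bundling: since $f$ is a finite combination of bounded, compactly supported atoms, $\mv f\in L^\infty(\rn)$, so $\{\mv f>2^k\}=\emptyset$ and there are simply no atoms for $k$ large positive. What does require care at each fixed level $k$ is that the sum over $j$ may be infinite; this is where the (uniform) continuity of $f$ is genuinely used, to truncate that sum with a uniformly small $L^\infty$ error --- and it is the reason the hypothesis restricts to \emph{continuous} atoms (for general $(H^1_w(\rn),\infty)$-atoms the norm equivalence \eqref{finite-eq} is false). Second, and more seriously, the low-level tail $\sum_{k<-K}\sum_j\lambda_{k,j}a_{k,j}$ is \emph{not} supported in a fixed ball $B_0\supset\supp f$: the Whitney balls of $\Omega_k=\{\mv f>2^k\}$ for $k\to-\infty$ spread out to a ball $B_K$ whose radius grows as $2^{-k}\to\infty$ (roughly, $w(B_K)\sim 2^{K}\|f\|_{H^1_w(\rn)}$ by the decay $\mv f(x)\ls\|f\|_{H^1_w(\rn)}$-type estimates at infinity). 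The correct statement is that this tail equals the good part $g^{-K}$ of the Calder\'on--Zygmund decomposition, hence has $L^\infty$-norm $\ls 2^{-K}$ and support in $B_K$, and the single-atom coefficient one must control is $2^{-K}w(B_K)$, not $\|g_K\|_{L^\infty}w(B_0)$; verifying $2^{-K}w(B_K)\ls\|f\|_{H^1_w(\rn)}$ is precisely where the membership $w\in A_{1+\delta/n}(\rn)$ and the pointwise decay of $\mv f$ must be played off against each other. As written, your argument for the hard direction of \eqref{finite-eq} would not go through without this correction.
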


 Let $w\in A_{1+\delta/n}(\R^n)$ and $\varepsilon\in(0,\fz)$. Recall that $m$ is called an {\it $(H_w^1(\rn),\infty,\varepsilon)$-molecule} related to the ball $B\subset \R^n$ if
 \begin{enumerate}
  \item[(i)] $\int_{\bR^n} m(x)dx=0$,

 \item[(ii)] $\|m\|_{L^\infty(S_j)}\leq 2^{-j\varepsilon} [w(S_j)]^{-1}$,
 $j\in\zz_+$,
 where $S_0=B$ and $S_j= 2^{j+1}B\setminus 2^jB$ for $j\in\nn$.
\end{enumerate}

 \begin{lemma}\label{molecule}
 	Let $w\in A_{1+\delta/n}(\R^n)$ and $\varepsilon>0$. Then there exists a positive constant $C$ such that, for any $(H_w^1(\rn),\infty,\varepsilon)$-molecule $m$ related to the ball $B$, it holds true that
 	$$m= \sum_{j=0}^\infty \lambda_j a_j,$$
 	where $\{a_j\}_{j=0}^\fz$ are $(H_w^1(\rn),\infty)$-atoms  related to the balls
	$\{2^{j+1}B\}_{j\in\zz_+}$
 	and there exists a positive constant $C$ such that
	$|\lambda_j|\le C 2^{-j\varepsilon}$ for all $j\in\zz_+$.
 \end{lemma}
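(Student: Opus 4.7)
The plan is to use the standard annular molecule-to-atom decomposition. Set $m_j := m\chi_{S_j}$ and $N_j := \int_\rn m_j(x)\,dx$, so $m = \sum_{j\in\zz_+} m_j$, each $m_j$ is supported in $S_j \subset 2^{j+1}B$, and condition (ii) gives $\|m_j\|_\infty \le 2^{-j\varepsilon}/w(S_j)$ and hence $|N_j| \le 2^{-j\varepsilon}|S_j|/w(S_j)$. Since $m_j$ need not have mean zero, I first correct: let $\eta_j := \chi_{S_j}/|S_j|$ and $b_j := m_j - N_j\eta_j$, which has $\supp b_j \subset 2^{j+1}B$, $\int_\rn b_j(x)\,dx = 0$, and $\|b_j\|_\infty \le 2\cdot 2^{-j\varepsilon}/w(S_j)$. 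Applying the lower bound in (\ref{fundamental estimates for Muckenhoupt weights}) to the inclusion $S_j\subset 2^{j+1}B$ (whose volume ratio is bounded below by $1-2^{-n}$) yields $w(S_j)\gs w(2^{j+1}B)$, so one may write $b_j=\lambda_j^{(1)} a_j^{(1)}$, where $a_j^{(1)}$ is an $(H_w^1(\rn),\infty)$-atom on $2^{j+1}B$ and $|\lambda_j^{(1)}|\le C\cdot 2^{-j\varepsilon}$.

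Next, the residue $R := m - \sum_j b_j = \sum_j N_j\eta_j$ must itself be decomposed into atoms. Since $\int_\rn m(x)\,dx = 0$, one has $\sum_j N_j = 0$; setting the tail sums $M_j := \sum_{k\ge j} N_k$ (so $M_0=0$ and $M_j\to 0$), Abel summation gives
$$R = \sum_{j\ge 1} M_j\,(\eta_j - \eta_{j-1}),$$
and each $\eta_j - \eta_{j-1}$ is supported in $S_{j-1}\cup S_j \subset 2^{j+1}B$, has zero integral, and satisfies $\|\eta_j - \eta_{j-1}\|_\infty \ls |2^{j-1}B|^{-1}$. Normalizing each term as a multiple of an $(H_w^1(\rn),\infty)$-atom on $2^{j+1}B$ gives a coefficient satisfying $|\lambda_j^{(2)}| \ls |M_j|\,w(2^{j+1}B)/|2^{j-1}B|$.

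The main obstacle is bounding $|\lambda_j^{(2)}|$ by $C\cdot 2^{-j\varepsilon}$. Here $|M_j|\le\sum_{k\ge j} |N_k|$ is estimated term-by-term via the molecular bound together with $|S_k|/w(S_k) \ls |2^{k+1}B|/w(2^{k+1}B)$, while the upper bound in (\ref{fundamental estimates for Muckenhoupt weights}) supplies the reverse-H\"older-type inequality $w(2^{j+1}B)/w(2^{k+1}B) \ls 2^{-n(k-j)(r-1)/r}$ with some $r>1$ determined by $w$. Substituting these into the expression for $|\lambda_j^{(2)}|$ and summing the resulting geometric series in $l := k-j$ produces the desired bound $C\cdot 2^{-j\varepsilon}$, with $C$ depending on $w$ and $\varepsilon$ through the reverse H\"older exponent. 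Concatenating the two families $\{a_j^{(1)}\}$ and $\{a_j^{(2)}\}$ (reindexing so that every atom is attached to a ball of the form $2^{j+1}B$) then completes the decomposition claimed in the lemma.
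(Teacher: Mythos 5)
Your overall strategy is the standard molecule-to-atom decomposition that the authors allude to when they cite \cite[Theorem 4.7]{SY}, and the first half of your argument --- splitting $m=\sum_j m_j$, subtracting the means, and using the lower bound in (\ref{fundamental estimates for Muckenhoupt weights}) to get $w(S_j)\gs w(2^{j+1}B)$, hence $b_j=\lambda_j^{(1)}a_j^{(1)}$ with $|\lambda_j^{(1)}|\ls 2^{-j\varepsilon}$ --- is correct.

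The gap is in the treatment of the residue. Assembling your own estimates for $\lambda_j^{(2)}$ gives
$$|\lambda_j^{(2)}|\ls \sum_{k\ge j}2^{-k\varepsilon}\,\frac{|2^{k+1}B|}{w(2^{k+1}B)}\cdot\frac{w(2^{j+1}B)}{|2^{j+1}B|}\ls 2^{-j\varepsilon}\sum_{l\ge 0}2^{-l\varepsilon}\,2^{ln}\,2^{-ln(r-1)/r}=2^{-j\varepsilon}\sum_{l\ge 0}2^{l(n/r-\varepsilon)},$$
and this geometric series has ratio $2^{n/r-\varepsilon}$: it converges only when $\varepsilon>n/r$. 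The lemma is asserted for every $\varepsilon>0$ and every $w\in A_{1+\delta/n}(\rn)$, whose reverse H\"older exponent $r$ may be arbitrarily close to $1$, so that $n/r$ is arbitrarily close to $n$; moreover, in the application inside the proof of Theorem \ref{t-hbd} one has $\varepsilon=n+\delta-nq\le\delta\le 1$, typically far below this threshold. So the step ``summing the resulting geometric series \dots produces the desired bound'' is precisely where the argument fails. The same issue already undermines the Abel summation itself: the absolute convergence of $\sum_k N_k$ and the claim $M_j\to 0$ are only guaranteed by your bounds when $\varepsilon>n/r$. Switching to the inner partial sums $M_j=-\sum_{k<j}N_k$ does not repair this, since the lower bound in (\ref{fundamental estimates for Muckenhoupt weights}) then yields $|\lambda_j^{(2)}|\ls\sum_{k<j}2^{-k\varepsilon}2^{(j-k)n(p-1)}\sim 2^{jn(p-1)}$, which is unbounded in $j$. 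A correct proof must exploit both representations of $M_j$ simultaneously, or impose a lower bound on $\varepsilon$ in terms of the weight as the standard weighted molecular theory does; as written, your argument does not establish the lemma in the stated generality.
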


 \begin{proof}
 	The proof of this lemma is standard (see, for example, \cite[Theorem 4.7]{SY}),
	the details being omitted.
 \end{proof}

Now we are ready to give the proofs of Theorems \ref{t-bmow} and \ref{t-hbd}.

\begin{proof}[\bf Proof of Theorem \ref{t-bmow}]
First, we prove that (ii) implies (i). Since $w\in A_{1+\delta/n}(\R^n)$,
it follows that there exists $q\in (1,1+\delta/n)$ such that $w\in A_{q}(\R^n)$. By Lemma \ref{blyz}, it suffices to prove that,
for any continuous $(H_w^1(\rn),\infty)$-atom $a$ related to the ball $B=B(x_0,r)$
with $x_0\in\rn$ and $r\in(0,\fz)$,
\begin{equation}\label{bt0}
\|[b,T](a)\|_{L^1_w(\rn)}\ls \|b\|_{\mathcal{BMO}_w(\rn)}.
\end{equation}

By Lemma \ref{the class of K type} and the boundedness of $T$ from $H^1_w(\rn)$ to $L^1_w(\rn)$, (\ref{bt0}) is reduced to showing that
\begin{equation}\label{the key estimate}
\|(b-b_B)a\|_{H^1_w(\rn)}\ls \|b\|_{\mathcal{BMO}_w(\rn)}.
\end{equation}
To do this, for every $x\in (2B)^\com$ and $y\in B$,
we see that $|x-y|\sim|x-x_0|$ and
\begin{eqnarray*}
\mv ([b-b_B]a)(x)
&&\ls\sup_{t\in(0,\fz)}\frac{1}{t^n}\int_B\int_{B}|b(y) - b_B||a(y)|
\left|\phi\left(\frac{x-y}t\right)\right|dy\\
&&\ls\frac{1}{|x-x_0|^n}\int_{B}|b(y) - b_B||a(y)|\,dy.
\end{eqnarray*}
Hence
$$\int_{(2B)^\com} \mv ([b-b_B]a)(x) w(x)\,dx\ls \|b\|_{\mathcal{BMO}_w(\rn)}.$$
In addition, by the boundedness of $\mv$ on $L^q_w(\rn)$ with $q\in(1,1+\delta/n)$,
Lemma \ref{a John-Nirenberg type lemma for weighted BMO} and Proposition \ref{inclusions between BMO spaces}, we know that
\begin{eqnarray*}
\int_{2B} \mv ([b-b_B]a)(x) w(x)\,dx &&\ls w(2B)^{1/q'} \|(b-b_B)a\|_{L^q_w(\rn)}\\
&&\ls \left[ \frac{1}{w(B)}\int_B |b(x) -b_B|^q w(x)\,dx \right]^{1/q}\\
&&\ls \|b\|_{\BMO}\\
&&\ls\|b\|_{\mathcal{BMO}_w(\rn)},
\end{eqnarray*}
which concludes the proof of (ii) implying (i).

We now prove that (i) implies (ii).
Let $\{R_j\}_{j=1}^n$ be the classical Riesz transforms.
Then, by Lemma \ref{the class of K type}, we find that,
for any $(H_w^1(\rn),\infty)$-atom $a$ related to the ball $B$ and $j\in\{1,\ldots,n\}$,
\begin{eqnarray*}
\|R_j([b-b_B]a)\|_{L^1_w(\rn)} &&\leq \|[b,R_j](a)\|_{L^1_w(\rn)} + \|(b-b_B)R_ja\|_{L^1_w(\rn)}\\
&&\ls \|[b,R_j]\|_{H^1_w(\rn)\to L^1_w(\rn)} + \|b\|_{\BMO},
\end{eqnarray*}
here and hereafter,
$$\|[b,R_j]\|_{H^1_w(\rn)\to L^1_w(\rn)}
:=\sup_{\|f\|_{H^1_w(\rn)}\le 1}\|[b,R_j]f\|_{L^1_w(\rn)}.$$
By the Riesz transform characterization of $H_w^1(\rn)$ (see \cite{Wh76}),
we see that $(b-b_B)a\in H^1_w(\bR^n)$ and, moreover,
\begin{equation}\label{2.7}
\|(b-b_B)a\|_{H^1_w(\rn)}\ls \|b\|_{\BMO} + \sum_{j=1}^n \|[b,R_j]\|_{H^1_w(\rn)\to L^1_w(\rn)}.
\end{equation}

For any ball $B:= B(x_0,r)\subset\bR^n$ with $x_0\in\rn$ and $r\in(0,\fz)$,
let 
$$a:= \frac{1}{2w(B)}(f-f_B)\chi_B,$$ 
where $f:=\sign (b-b_B)$.
It is easy to see that $a$ is an $(H_w^1(\rn),\infty)$-atom related to the ball $B$. Moreover, for every $x\notin B$, Lemma \ref{l-con} gives us that
\begin{eqnarray*}
\frac{1}{|x-x_0|^n} \frac{1}{2w(B)} \int_B |b(x)- b_B|\,dx
&&=\frac{1}{|x-x_0|^n} \int_{B}(b(x)-b_B)a(x)\,dx\\
&&\ls\mv ([b-b_B]a)(x).
\end{eqnarray*}
This, together with (\ref{2.7}), allows to conclude that $b\in \mathcal{BMO}_w(\rn)$ and, moreover,
$$\|b\|_{\mathcal{BMO}_w(\rn)}\ls \|b\|_{\BMO} + \sum_{j=1}^n \|[b,R_j]\|_{H^1_w(\rn)\to L^1_w(\rn)},$$
which complete the proof of Theorem \ref{t-bmow}.
\end{proof}

\begin{proof}[\bf Proof of Theorem \ref{t-hbd}]
	By Lemma \ref{blyz}, it suffices to prove that,
	for any continuous $(H_w^1(\rn),\infty)$-atom $a$ related to the ball $B$,
	\begin{equation}\label{Hardy type estimate for atoms}
	\|[b,T](a)\|_{H^1_w(\rn)}\ls \|b\|_{\mathcal{BMO}_w(\rn)}.
	\end{equation}
	
 By (\ref{the key estimate}) and the boundedness of $T$ on $H^1_w(\rn)$
 (see \cite[Theorem 1.2]{Ky1}), (\ref{Hardy type estimate for atoms}) is reduced to
 proving that
$$\|(b-b_B)Ta\|_{H^1_w(\rn)}\ls \|b\|_{\mathcal{BMO}_w(\rn)}.$$

Since $w\in A_{1+\delta/n}(\R^n)$, it follows that
there exists $q\in (1,1+\delta/n)$ such that $w\in A_q(\R^n)$.
By this and the fact that $T$ is a $\delta$-Calder\'on-Zygmund operator,
together with a standard argument, we find that $Ta$ is an
$(H_w^1(\rn),\infty,\varepsilon)$-molecule related to the ball $B$ with
$\varepsilon:= n+\delta-nq>0$. Therefore, by Lemma \ref{molecule}, we have
$$Ta= \sum_{j=0}^\infty \lambda_j a_j,$$
where $\{a_j\}_{j=0}^\fz$ are $(H_w^1(\rn),\infty)$-atoms
related to the balls $\{2^{j+1}B\}_{j=0}^\fz$ and $|\lambda_j|\ls 2^{-j \varepsilon}$
for all $j\in\zz_+$.
Thus, by (\ref{the key estimate}) and Proposition \ref{inclusions between BMO spaces}, we obtain
\begin{eqnarray*}
\|(b-b_B)Ta\|_{H^1_w(\rn)}
 &&\leq \sum_{j=0}^\infty |\lambda_j|
 \lf[\|(b-b_{2^{j+1}B})a_j\|_{H^1_w(\rn)} +\|(b_{2^{j+1}B}-b_B)a_j\|_{H^1_w(\rn)} \r]\\
&&\ls \|b\|_{\mathcal{BMO}_w(\rn)}\sum_{j=0}^\infty 2^{-j\varepsilon}
+ \|b\|_{\BMO}\sum_{j=0}^\infty (j+1) 2^{-j\varepsilon}\\
&&\ls \|b\|_{\mathcal{BMO}_w(\rn)},
\end{eqnarray*}
which completes the proof of (i) implying (ii)
and hence Theorem \ref{t-hbd}.
\end{proof}

{\bf Acknowledgements.} The paper was completed when the second author was visiting
to Vietnam Institute for Advanced Study in Mathematics (VIASM), who would like
to thank the VIASM for its financial support and hospitality.

\bigskip

\noindent Yiyu Liang

\medskip

\noindent Department of Mathematics,
Beijing Jiaotong University,
Beijing 100044, People's Republic of China

\smallskip

\noindent {\it E-mail:} \texttt{yyliang@bjtu.edu.cn}

\bigskip

\noindent Luong Dang Ky

\medskip

\noindent Department of Mathematics,
University of Quy Nhon,
170 An Duong Vuong,
Quy Nhon, Binh Dinh, Vietnam

\smallskip

\noindent {\it E-mail}: \texttt{dangky@math.cnrs.fr}

\bigskip

\noindent Dachun Yang {\it(Corresponding Author)}

\medskip

\noindent School of Mathematical Sciences, Beijing Normal University,
Laboratory of Mathematics and Complex Systems, Ministry of
Education, Beijing 100875, People's Republic of China

\smallskip

\noindent {\it E-mail:} \texttt{dcyang@bnu.edu.cn}

\end{document}